\newtheorem{theorem}{Theorem}
\newtheorem{lemma}{Lemma}
\newtheorem{proposition}{Proposition}
\newtheorem{corollary}{Corollary}
\theoremstyle{definition}
\newtheorem{remark}{Remark}
\begin{document}
\title[Invariant subspaces of shift plus Volterra operator]%
{The invariant subspaces of the shift plus integer multiple of Volterra operator on Hardy spaces}

\author{Qingze Lin}

\address{School of Applied Mathematics, Guangdong University of Technology, Guangzhou, Guangdong, 510520, P.~R.~China}\email{gdlqz@e.gzhu.edu.cn}

\begin{abstract}
\v{C}u\v{c}kovi\'{c} and Paudyal recently characterized the lattice of invariant subspaces of the shift plus a complex Volterra operator on the Hilbert space $H^2$ on the unit disk. Motivated by the idea of Ong, in this paper, we give a complete characterization of the lattice of invariant subspaces of the shift operator plus a positive integer multiple of the Volterra operator on Hardy spaces $H^p$, which essentially extends their works to the more general cases when $1\leq p<\infty$.
\end{abstract}
\keywords{invariant subspace, shift operator, Volterra operator, Hardy space} \subjclass[2010]{45P05, 30H10}

\maketitle

\section{\bf Introduction}
It is still an open problem whether every bounded linear operator on an infinite dimensional separable complex Hilbert space has a nontrivial invariant subspace (see \cite{CP,BSY} for comprehensive introductions and the recent developments). During the 1930s, John von Neumann proved that every completely continuous (i.e., compact) operator on a Hilbert space has a nontrivial invariant subspace, but he did not publish it while the same result was published by Aronszajn and Smith in \cite{AS}. The result was extended to polynomially compact operators by Bernstein and Robinson \cite{BR} by utilizing the non-standard analysis, which was translated into standard analysis by Halmos \cite{PRH} in the same issue of the same journal.

Although lots of famous mathematicians had made great efforts on this open problem for many years, the operators, of which the invariant subspaces can be characterized successfully, are still very few (see \cite{ENFLO,READ1,READ2}). However, the first successful characterization of the invariant subspaces was obtained by Beurling's paper \cite{BEURLING}, in which he characterized all the invariant subspaces of the shift operator $(M_zf)(z)=zf(z)$
acting on the Hilbert space on the unit disk of the complex plane. After his work, the lattice of invariant subspaces of the shift operator on the Hardy space was also obtained (see \cite{DUREN} for example). It is of interest that Cowen and Wahl \cite{CW} and Matache \cite{VM} recently studied relationships between shift-invariant subspaces of Hilbert space and invariant subspaces of composition operators. Although the Beurling's theorem in Hardy spaces is not true in the Bergman spaces, Aleman, Richter and Sundberg \cite{ARS} proved that every cyclic invariant subspace of shift operator on the Bergman space $L_a^p\ ,0<p<\infty,$ is generated by its extremal function.

Let $\mathbb{D}$ be the unit disk of the complex plane and $H(\mathbb D)$ be the space consisting of all analytic functions on $\mathbb{D}$. For $0<p<\infty$, the Hardy space $H^{p}$ on the unit disk $\mathbb{D}$ consists of all functions $f\in H(\mathbb D)$ satisfying
$$\|f\|_{H^{p}}=\Big(\sup_{0<r<1}\frac{1}{2\pi}\int_{0}^{2\pi}|f(re^{i\theta})|^{p}d\theta\Big)^{1/p}<\infty\ .$$
For $p=\infty$, the space $H^{\infty}$ is defined as
$$H^{\infty}=\{f\in H(\mathbb D):\ \|f\|_{\infty}=\sup_{z\in \mathbb D}\{|f(z)|\}<\infty\}\,.$$
Then, for any function $g\in H(\mathbb D)$, the Volterra type operator $T_g$ is defined by
$$(T_gf)(z)=\int_0^z f(\omega)g'(\omega)d\omega\qquad \text{ for any }f\in H(\mathbb D)\,.$$

In 2008, Aleman and Korenblum \cite{AK} gave a complete description of the invariant subspaces of $H^p$ under the Volterra type operator, and as an application, they gave the characterizations of the invariant subspaces of the Volterra type operator on a large class of Banach spaces of analytic functions in the unit disk containing the Bergman spaces, Dirichlet spaces and Besov spaces and so forth.

Sarason \cite{SARASON1,SARASON2} used the Beurling's ideas to give the lattice of invariant subspaces of the space $L^2(0,1)$ under the real version of the shift operator plus a Volterra operator. Using the idea of Sarason, Montes-Rodr\'{\i}guez, Ponce-Escudero and Shkarin \cite{MRPE} provided a precise description of the lattice of invariant subspaces of composition operators on the Hilbert space, whose inducing symbol is a parabolic non-automorphism. Following Sarason's work as well, \v{C}u\v{c}kovi\'{c} and Paudyal \cite{ZCBP} characterized the lattice of invariant subspaces of shift operator plus complex Volterra operator on the Hilbert space $H^2$. Their idea is to transform the problems of invariant subspaces of shift operator plus complex Volterra operator on $H^2$ into the problems of invariant subspaces of $M_z$ on the space $S^2$ consisting of analytic functions with derivative in $H^2$. Recently, their results were partly extended by Lin, Liu and Wu \cite{LIN}.

In the present work, motivated by the works of Ong \cite{ONG} who studied the extensions of Sarason's results and by the works of \v{C}u\v{c}kovi\'{c} and Paudyal \cite{ZCBP1} who study the complex version of their results, we extend the results of \v{C}u\v{c}kovi\'{c} and Paudyal \cite{ZCBP1} to the more general cases when $1\leq p<\infty$\,, and in the meantime, simplify their proofs.

For $1\leq p<\infty$, denote $S_n^p$ the space of analytic functions with $n$th-derivative in $H^p$, that is,
$$S^p_n=\{f\in H(\mathbb{D}):\ f^{(n)}\in H^p\}\,,\qquad \text{ where } n\in \mathbb{N}\,,$$
endowed with the norm defined, recursively, by
$$\|f\|_{S_n^p}=|f(0)|+\|f'\|_{S_{n-1}^p} \qquad \text{ where } S_0^p\equiv H^p\,.$$
 Then we define the subspace of $S_{n}^p$ by:
$$S_{n,0}^p=\{f\in S_{n}^p:\ f^{(m)}(0)=0,\ 0\leq m\leq n-1\}\,.$$

In Section~2, we give some definitions of several concepts and present the main results of this paper. In Section~3, we characterize the properties of the spaces $S_n^p$ and the lattices of invariant subspaces of $M_z$ on $S_n^p$ and $S_{n,0}^p$\,. Then in Section~4, we complete the proof of the main results.

\section{\bf Main results}
Given any $n\in \mathbb{N}$, we introduce an operator, which is the sum of the shift plus a positive integer multiple of the Volterra operator:
$$Tf=M_zf+nT_{z}f\,.$$
This operator is the operator recently defined by \v{C}u\v{c}kovi\'{c} and Paudyal in \cite{ZCBP,ZCBP1}.

Now if $n\in\mathbb{N}$, consider the closed subsets $K_0,K_1,\ldots,K_{n-1}$ of $\partial\mathbb{D}$ and an inner function $G$ with the following four properties:\\
\phantom{(1)}(1) $K_0\supset K_1\supset\cdots\supset K_{n-1}$;\\
\phantom{(1)}(2) $K_0\backslash K_{n-1}$ is isolated;\\
\phantom{(1)}(3) the zeros of $G$ in $\mathbb{D}$ cluster only in $K_{n-1}$;\\
\phantom{(1)}(4) the singular measure associated with $G$ is supported by $K_{n-1}$\,.

Then, if $A$ is a \mbox{Banach} algebra, we denote by $J_{A}(G;K_0,K_1,\ldots,K_{n-1})$ the set of $f\in A$ (see \cite{BK,SF} for the details), satisfying the above four properties, such that\\
\phantom{(1)}(1) $f^{(j)}(z)=0\,, \text{ whenever }z\in K_j$, $0\leq j\leq n-1$;\\
\phantom{(1)}(2) the inner factor of $f$ is divisible by $G$.

Now we can state our main results as follows:
\begin{proposition}\label{pro1}
Let $n\in\mathbb{N}$ and $1\leq p<\infty$. Then the subspace $\mathcal{J}$ is invariant for $M_z$ on $S_{n}^p$ if and only if there exist $G,K_0,\ldots,K_{n-1}$ such that
$$\mathcal{J}=J_{S_{n}^p}(G;K_0,K_1,\ldots,K_{n-1})\,.$$
\end{proposition}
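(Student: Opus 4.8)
The plan is to prove the two inclusions separately, treating sufficiency (the ``if'' direction) as a direct verification and devoting the bulk of the work to necessity (the ``only if'' direction).

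For sufficiency, fix data $G, K_0, \ldots, K_{n-1}$ satisfying (1)--(4) and set $\mathcal{J} = J_{S_n^p}(G; K_0, \ldots, K_{n-1})$. I would first check that $\mathcal{J}$ is a closed subspace of $S_n^p$: each defining condition, namely the vanishing of $f^{(j)}$ on $K_j$ and the divisibility of the inner factor of $f$ by $G$, is preserved under $S_n^p$-norm limits, using that such convergence controls the boundary values of $f^{(j)}$ for $0 \le j \le n-1$ and forces convergence of inner factors. Invariance under $M_z$ is then the key point. Writing $g = M_z f = zf$ and applying the Leibniz rule gives $g^{(j)} = z f^{(j)} + j f^{(j-1)}$; at a point $\zeta \in K_j$ both $f^{(j)}(\zeta) = 0$ (since $\zeta \in K_j$) and $f^{(j-1)}(\zeta) = 0$ (since $\zeta \in K_j \subset K_{j-1}$ by the nesting (1)), so $g^{(j)}(\zeta) = 0$. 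Meanwhile the inner factor of $zf$ is $z$ times that of $f$, hence still divisible by $G$. Thus $M_z f \in \mathcal{J}$, and $\mathcal{J}$ is invariant.

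For necessity, given an $M_z$-invariant subspace $\mathcal{J}$, I would construct the data intrinsically. Define
$$K_j = \{\zeta \in \partial\mathbb{D}:\ f^{(j)}(\zeta) = 0 \text{ for every } f \in \mathcal{J}\}, \qquad 0 \le j \le n-1,$$
the common boundary zero set of the $j$th derivatives (meaningful because, for $j \le n-1$, functions in $S_n^p$ are regular enough for $f^{(j)}$ to have well-defined boundary values), and let $G$ be the greatest common inner divisor of the inner factors of all $f \in \mathcal{J}$. The inclusion $\mathcal{J} \subseteq J_{S_n^p}(G; K_0, \ldots, K_{n-1})$ is then immediate from these definitions. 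The structural properties (1)--(4) would be verified using invariance together with the regularity theory of $S_n^p$: the nesting $K_0 \supset \cdots \supset K_{n-1}$ follows because $M_z$-invariance propagates a boundary zero of $f^{(j)}$ downward to $f^{(j-1)}$ (again via $g^{(j)} = z f^{(j)} + j f^{(j-1)}$), while the isolation of $K_0 \setminus K_{n-1}$, the clustering of the zeros of $G$ in $K_{n-1}$, and the support of its singular measure are read off from the known boundary behaviour of $S_n^p$ functions and their inner factors.

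The crux is the reverse inclusion $J_{S_n^p}(G; K_0, \ldots, K_{n-1}) \subseteq \mathcal{J}$, and this is where I expect the main difficulty. My approach would be induction on $n$, with the base case $n = 0$ being the Beurling-type theorem on $S_0^p = H^p$ supplemented by the ideal-theoretic results of \cite{BK,SF}. For the inductive step I would exploit the recursive norm $\|f\|_{S_n^p} = |f(0)| + \|f'\|_{S_{n-1}^p}$ to transfer the problem to level $n-1$, the delicate point being that differentiation intertwines $M_z$ on $S_n^p$ with $I + M_z$ rather than with $M_z$ on $S_{n-1}^p$, so the reduction must separate the $0$th-order data on $K_0$ from the higher-order data. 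Concretely, given $f$ satisfying all the defining conditions, I would construct a sequence in $\mathcal{J}$ converging to $f$ in $S_n^p$-norm by multiplying suitable generators of $\mathcal{J}$ by outer functions and polynomials engineered to vanish to the correct order on each $K_j$ and to absorb the inner divisor $G$; producing such approximants while simultaneously matching the vanishing across all the nested sets $K_0 \supset \cdots \supset K_{n-1}$, and then showing that the limit lands in $\mathcal{J}$ itself and not merely in its closure, is the technical heart of the argument and the step most likely to require the full strength of the $S_n^p$ regularity theory developed in Section~3.
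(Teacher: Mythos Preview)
Your approach diverges from the paper's in a fundamental way, and the divergence is costly. The paper's proof of this proposition is one line: it combines Lemma~\ref{le2} (the $M_z$-invariant subspaces of $S_n^p$ are exactly the closed ideals of $S_n^p$) with Lemma~\ref{le4} (the \v{S}amojan--Korenblum classification of the closed ideals of $S_n^p$). The reduction to closed ideals is possible precisely because Proposition~\ref{pro4} shows $S_n^p$ is a Banach algebra in which polynomials are dense, so any closed $M_z$-invariant subspace is automatically an ideal. You never invoke this algebra structure, and as a result you are forced to attack the reverse inclusion $J_{S_n^p}(G;K_0,\ldots,K_{n-1})\subset\mathcal J$ by hand.

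That reverse inclusion is exactly the content of the deep results in \cite{BK,SF}, and your inductive sketch does not come close to reproducing it. The phrase ``multiplying suitable generators of $\mathcal J$ by outer functions and polynomials engineered to vanish to the correct order'' hides the entire difficulty: constructing such approximants so that they simultaneously match the inner divisor, the stratified boundary zero sets, and converge in the $S_n^p$ norm is a substantial piece of hard analysis (this is why the paper outsources it). Your proposed induction on $n$ via the recursion $\|f\|_{S_n^p}=|f(0)|+\|f'\|_{S_{n-1}^p}$ is also problematic, since, as you yourself note, differentiation does not intertwine $M_z$ with $M_z$ but with a perturbation, and you give no mechanism for undoing that perturbation. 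In short, the missing idea is the passage through closed ideals of the Banach algebra $S_n^p$; once that is in place, the proposition follows immediately from the cited literature, whereas your direct route would require reproving that literature from scratch.
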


\begin{proposition}\label{pro2}
Let $n\in\mathbb{N}$ and $1\leq p<\infty$. Then the subspace $\mathcal{J}$ is invariant for $M_z$ on $S_{n,0}^p$ if and only if there exist $G,K_0,\ldots,K_{n-1}$ such that
$$\mathcal{J}=J_{S_{n,0}^p}(G;K_0,K_1,\ldots,K_{n-1})\,.$$
\end{proposition}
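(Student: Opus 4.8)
The plan is to derive Proposition 2 from Proposition 1 by a reduction argument, exploiting the close relationship between $S_n^p$ and $S_{n,0}^p$. Since $S_{n,0}^p$ is the subspace of $S_n^p$ consisting of functions $f$ with $f^{(m)}(0)=0$ for $0\le m\le n-1$, it is natural to view an $M_z$-invariant subspace $\mathcal{J}\subseteq S_{n,0}^p$ as sitting inside the larger space and to try to invoke Proposition~\ref{pro1}. Let me think about the structure: every $f\in S_{n,0}^p$ vanishes to order $n$ at the origin, i.e. $f(z)=z^n h(z)$ for some $h\in H(\mathbb{D})$. The first thing I would check is that the map $f\mapsto z^n f$ (or its inverse $g\mapsto g/z^n$) sets up a clean correspondence, and that $M_z$ commutes with multiplication by $z^n$, so invariance is preserved under this identification.

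**I would first establish that $S_{n,0}^p$ is itself $M_z$-invariant and closed in $S_n^p$**, so that the lattice of $M_z$-invariant subspaces of $S_{n,0}^p$ is precisely the sublattice of the lattice of $M_z$-invariant subspaces of $S_n^p$ consisting of those $\mathcal{J}$ contained in $S_{n,0}^p$. The verification that $S_{n,0}^p$ is $M_z$-invariant amounts to noting that if $f^{(m)}(0)=0$ for $0\le m\le n-1$, then $(zf)^{(m)}(0)=m f^{(m-1)}(0)=0$ for $0\le m\le n-1$ as well, using the Leibniz rule. For the \emph{necessity} direction, I would take an $M_z$-invariant $\mathcal{J}\subseteq S_{n,0}^p$, regard it as an $M_z$-invariant subspace of $S_n^p$, and apply Proposition~\ref{pro1} to obtain data $G,K_0,\ldots,K_{n-1}$ with $\mathcal{J}=J_{S_n^p}(G;K_0,\ldots,K_{n-1})$. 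The point is then that every $f\in\mathcal{J}$ already satisfies $f^{(m)}(0)=0$ for $0\le m\le n-1$, and I would argue this forces the representation to coincide with $J_{S_{n,0}^p}(G;K_0,\ldots,K_{n-1})$, i.e. that intersecting the $S_n^p$-description with the vanishing conditions at the origin is exactly the defining datum restricted to $S_{n,0}^p$.

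**For the \emph{sufficiency} direction I would argue directly** that for any admissible data $G,K_0,\ldots,K_{n-1}$, the set $J_{S_{n,0}^p}(G;K_0,\ldots,K_{n-1})=J_{S_n^p}(G;K_0,\ldots,K_{n-1})\cap S_{n,0}^p$ is $M_z$-invariant; this is immediate since both $J_{S_n^p}(G;K_0,\ldots,K_{n-1})$ (by Proposition~\ref{pro1}) and $S_{n,0}^p$ are $M_z$-invariant, and the intersection of invariant subspaces is invariant. The remaining subtlety is to confirm that $J_{S_{n,0}^p}(G;\ldots)$ is genuinely a subspace of $S_{n,0}^p$ and that it is nontrivial/consistent with the stated characterization, which follows from the definitions once the intersection structure is in place.

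**The main obstacle I anticipate** is the necessity direction, specifically showing that the data extracted from Proposition~\ref{pro1} is compatible with the origin-vanishing conditions without losing or gaining any functions: one must ensure that the zero-set conditions on $\partial\mathbb{D}$ and the inner-factor divisibility by $G$ interact correctly with the additional requirement $f^{(m)}(0)=0$, and in particular that no new constraints on $G$ or the $K_j$ are forced by passing to $S_{n,0}^p$. I expect this to require a careful check that the representation in Proposition~\ref{pro1} is \emph{canonical} enough that restriction to $S_{n,0}^p$ does not collapse distinct data, so that the correspondence between the two lattices is a genuine bijection respecting the parametrization by $(G;K_0,\ldots,K_{n-1})$.
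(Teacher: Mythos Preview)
Your proposal is correct and follows essentially the same route as the paper: both arguments reduce to the classification on $S_n^p$ by viewing an $M_z$-invariant subspace of $S_{n,0}^p$ as one of $S_n^p$ (the paper phrases this via Lemma~\ref{le3} as ``closed ideals of $S_n^p$ contained in $S_{n,0}^p$'', you phrase it directly via Proposition~\ref{pro1}, but Lemma~\ref{le2} makes these equivalent), and then use the identification $J_{S_{n,0}^p}(G;K_0,\ldots,K_{n-1})=J_{S_n^p}(G;K_0,\ldots,K_{n-1})\cap S_{n,0}^p$. Your ``main obstacle'' is not actually an obstacle: since the statement only asserts \emph{existence} of data $(G;K_0,\ldots,K_{n-1})$, once Proposition~\ref{pro1} yields $\mathcal{J}=J_{S_n^p}(G;\ldots)\subseteq S_{n,0}^p$ the intersection with $S_{n,0}^p$ is trivially all of $\mathcal{J}$, and no canonicity or uniqueness check is needed.
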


\begin{theorem}\label{th1}
Let $n\in\mathbb{N}$ and $1\leq p<\infty$. Then the subspace $\mathcal{J}$ is invariant for $T$ on $H^p$ if and only if there exist $G,K_0,\ldots,K_{n-1}$ such that
$$\mathcal{J}=\{f^{(n)}:f\in J_{S_{n,0}^p}(G;K_0,\ldots,K_{n-1})\}\,.$$
\end{theorem}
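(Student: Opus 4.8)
The plan is to reduce Theorem~\ref{th1} to Proposition~\ref{pro2} by exhibiting an explicit isometric isomorphism between $S_{n,0}^p$ and $H^p$ that intertwines $M_z$ with $T$. Concretely, I would introduce the $n$-th derivative map $\Lambda\colon S_{n,0}^p\to H^p$, $\Lambda f=f^{(n)}$. First I would verify that $\Lambda$ is a bijective isometry: unwinding the recursive definition of the norm gives $\|f\|_{S_n^p}=\sum_{k=0}^{n-1}|f^{(k)}(0)|+\|f^{(n)}\|_{H^p}$, and on $S_{n,0}^p$ the boundary terms vanish, so $\|f\|_{S_{n,0}^p}=\|f^{(n)}\|_{H^p}=\|\Lambda f\|_{H^p}$. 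Surjectivity follows by integrating any $g\in H^p$ exactly $n$ times subject to the initial conditions $f^{(m)}(0)=0$ for $0\le m\le n-1$, which produces the unique preimage in $S_{n,0}^p$; injectivity is immediate since $f^{(n)}=0$ together with these conditions forces $f=0$.

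Next I would establish the intertwining identity $\Lambda M_z=T\Lambda$ on $S_{n,0}^p$. Expanding by the Leibniz rule and using $(z)^{(k)}=0$ for $k\ge 2$,
$$\Lambda(M_zf)=(zf)^{(n)}=z\,f^{(n)}+n\,f^{(n-1)},$$
whereas
$$T(\Lambda f)=z\,f^{(n)}+n\int_0^z f^{(n)}(\omega)\,d\omega=z\,f^{(n)}+n\,f^{(n-1)},$$
the final step using $f^{(n-1)}(0)=0$, which holds precisely because $f\in S_{n,0}^p$. This is the one place where the vanishing conditions defining $S_{n,0}^p$ (as opposed to the larger $S_n^p$) are essential, and it explains why the theorem is phrased in terms of $J_{S_{n,0}^p}$ rather than $J_{S_n^p}$.

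Finally I would transfer the lattice structure. Since $\Lambda$ is an isometric isomorphism, it carries closed subspaces to closed subspaces, and because $\Lambda M_z=T\Lambda$ with $\Lambda$ injective, a closed subspace $\mathcal M\subseteq S_{n,0}^p$ is $M_z$-invariant if and only if $\Lambda(\mathcal M)$ is $T$-invariant. Thus $\mathcal J\subseteq H^p$ is $T$-invariant if and only if $\mathcal J=\Lambda(\mathcal M)$ for some $M_z$-invariant $\mathcal M\subseteq S_{n,0}^p$. Invoking Proposition~\ref{pro2}, such $\mathcal M$ are exactly the spaces $J_{S_{n,0}^p}(G;K_0,\ldots,K_{n-1})$, and applying $\Lambda$ gives $\mathcal J=\{f^{(n)}:f\in J_{S_{n,0}^p}(G;K_0,\ldots,K_{n-1})\}$, which is the assertion.

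The genuine difficulty of the whole program lies in Proposition~\ref{pro2} itself; once it is in hand, Theorem~\ref{th1} is a clean change of variables. Within the argument above, the only points demanding care are confirming that $T$ indeed maps $H^p$ into $H^p$ and that $\Lambda$ is a well-defined isometry for every $1\le p<\infty$ (the recursive norm and the mapping properties of the primitive on $H^p$, as developed for the spaces $S_n^p$), together with the bookkeeping of the initial conditions in the intertwining identity; I expect these to be the routine but load-bearing steps.
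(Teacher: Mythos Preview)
Your proposal is correct and follows essentially the same approach as the paper: the paper's Lemma~\ref{le5} is precisely your isometric isomorphism $\Lambda=\frac{d^n}{dz^n}$ together with the intertwining $\Lambda M_z=T\Lambda$, and the proof of Theorem~\ref{th1} then transfers the lattice via this similarity and invokes Proposition~\ref{pro2}, exactly as you outline. Your write-up is in fact a bit more explicit about why the vanishing condition $f^{(n-1)}(0)=0$ from $S_{n,0}^p$ is needed in the intertwining computation.
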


\section{\bf The lattice of invariant subspaces of $M_z$ on $S_n^p$ and $S_{n,0}^p$}
In this section, we are to prove Proposition~\ref{pro1} and Proposition~\ref{pro2}.
First, we need to characterize some properties of the spaces $S_n^p$ which will be essential for the subsequent discussions. In order to prove the boundedness of elements of the spaces $S_n^p$, the following important inequality will be very useful (see \cite[page~48]{DUREN} for its proof).
\begin{lemma}[Hardy's inequality]\label{le1}
If $f\in H^1$ and $f=\sum_{n=0}^\infty a_nz^n$, $z\in\mathbb{D}$, then
$$\sum_{n=0}^\infty\frac{|a_n|}{n+1}\leq\pi\|f\|_{H^1}\,.$$
\end{lemma}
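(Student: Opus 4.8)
The plan is to reduce this $H^1$ estimate to a Hilbert-space computation by way of the canonical factorization of $f$, and then to invoke the classical Hilbert inequality, whose sharp constant is exactly $\pi$. First I would write $f = B\psi$, where $B$ is the Blaschke product formed from the zeros of $f$ and $\psi$ is zero-free in $\mathbb{D}$ with $\|\psi\|_{H^1} = \|f\|_{H^1}$, using that $|B| = 1$ almost everywhere on $\partial\mathbb{D}$. Since $\psi$ does not vanish and $\log|\psi|$ is integrable, $\psi$ admits an analytic square root $\varphi \in H^2$ with $\varphi^2 = \psi$ and $\|\varphi\|_{H^2}^2 = \|\psi\|_{H^1} = \|f\|_{H^1}$. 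Setting $F = B\varphi$ and $G = \varphi$ then yields a factorization $f = FG$ with $F, G \in H^2$ and $\|F\|_{H^2} = \|G\|_{H^2} = \|f\|_{H^1}^{1/2}$.

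Next I would pass to Taylor coefficients. Writing $F = \sum_k b_k z^k$ and $G = \sum_j c_j z^j$, the coefficients of the product satisfy $a_n = \sum_{k=0}^n b_k c_{n-k}$, so that $|a_n| \le \sum_{k=0}^n |b_k|\,|c_{n-k}|$. Substituting this into the left-hand side and reindexing the resulting double sum by $k$ and $j = n-k$ (all terms being nonnegative, so Tonelli permits the rearrangement freely) gives
$$\sum_{n=0}^\infty \frac{|a_n|}{n+1} \le \sum_{k=0}^\infty \sum_{j=0}^\infty \frac{|b_k|\,|c_j|}{k+j+1}\,.$$
The right-hand side is precisely a Hilbert double sum in the nonnegative sequences $(|b_k|)$ and $(|c_j|)$.

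The final step is to apply Hilbert's inequality in the form $\sum_{k,j\ge 0} \frac{x_k y_j}{k+j+1} \le \pi \big(\sum_k x_k^2\big)^{1/2}\big(\sum_j y_j^2\big)^{1/2}$, valid for nonnegative sequences, in which the constant $\pi$ is sharp. This bounds the double sum by $\pi\,\|F\|_{H^2}\,\|G\|_{H^2} = \pi\,\|f\|_{H^1}$, which is exactly the asserted inequality. I expect the main obstacle to be the analytic input of the first step, namely the canonical factorization guaranteeing the zero-free square root $\varphi \in H^2$ together with the precise norm identity $\|\varphi\|_{H^2}^2 = \|f\|_{H^1}$: it is this identity that transfers the $H^1$ norm of $f$ to the $H^2$ norms of the factors and allows the sharp constant $\pi$ to survive. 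The Hilbert-inequality step, though it is what produces the value $\pi$, is a standard estimate that I would simply cite.
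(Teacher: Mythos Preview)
Your argument is correct and is in fact the classical proof of Hardy's inequality: factor $f\in H^1$ as a product of two $H^2$ functions via the Blaschke product and a square root of the zero-free part, expand the Cauchy product of coefficients, and bound the resulting double sum by Hilbert's inequality with sharp constant $\pi$. The paper itself does not prove this lemma at all; it simply quotes it and refers to \cite[page~48]{DUREN}, where precisely the argument you outline is given, so there is nothing further to compare.
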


\begin{proposition}\label{pro3}
Let $n\in\mathbb{N}$ and $1\leq p<\infty$. Then\\
\phantom{(1)}\textup{(1)} $\|f\|_{\infty}\leq\pi\|f\|_{S_1^p}\leq\pi^2\|f\|_{S_2^p}\leq\ldots\leq\pi^n\|f\|_{S_n^p}\,;$\\
\phantom{(1)}\textup{(2)} $S_n^p\subset S_{n-1}^p\subset\ldots\subset S_1^p\subset H^\infty\,.$
\end{proposition}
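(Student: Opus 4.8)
The plan is to establish (1) first, since (2) follows readily once the norm comparisons are in hand. I would split (1) into two pieces: the single estimate $\|f\|_\infty\leq\pi\|f\|_{S_1^p}$, which is where Hardy's inequality enters, and then the telescoping chain $\pi^k\|f\|_{S_k^p}\leq\pi^{k+1}\|f\|_{S_{k+1}^p}$, which I expect to fall out of an induction that reduces each step to the previous one applied to $f'$.

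For the base estimate, suppose $f=\sum_{k=0}^\infty a_kz^k\in S_1^p$, so that $f'\in H^p\subset H^1$ (using $p\geq 1$). The Taylor coefficients of $f'$ are $(k+1)a_{k+1}$, so applying Hardy's inequality (Lemma~\ref{le1}) to $f'$ gives $\sum_{k=1}^\infty|a_k|\leq\pi\|f'\|_{H^1}$, the factors of $k+1$ cancelling exactly. Since $\|f'\|_{H^1}\leq\|f'\|_{H^p}$ for $p\geq 1$, and $|f(z)|\leq\sum_{k=0}^\infty|a_k|=|f(0)|+\sum_{k=1}^\infty|a_k|$ for every $z\in\mathbb{D}$, taking the supremum over $z$ and using $\pi\geq 1$ to absorb the $|f(0)|$ term yields $\|f\|_\infty\leq|f(0)|+\pi\|f'\|_{H^p}\leq\pi\|f\|_{S_1^p}$.

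For the chain, I would argue by induction on $k$ that $\|f\|_{S_k^p}\leq\pi\|f\|_{S_{k+1}^p}$. The case $k=0$ reads $\|f\|_{H^p}\leq\pi\|f\|_{S_1^p}$, which follows from the base estimate together with $\|f\|_{H^p}\leq\|f\|_\infty$. For the inductive step, expand $\|f\|_{S_k^p}=|f(0)|+\|f'\|_{S_{k-1}^p}$, apply the inductive hypothesis to $f'$ (legitimate since $f\in S_{k+1}^p$ gives $f'\in S_k^p$) to get $\|f'\|_{S_{k-1}^p}\leq\pi\|f'\|_{S_k^p}$, and again use $\pi\geq 1$ on the $|f(0)|$ term; this gives $\|f\|_{S_k^p}\leq\pi(|f(0)|+\|f'\|_{S_k^p})=\pi\|f\|_{S_{k+1}^p}$. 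Multiplying the $k$-th inequality by $\pi^k$ produces the stated chain in (1).

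Finally, (2) is a consequence of (1): the inclusion $S_1^p\subset H^\infty$ is immediate from $\|f\|_\infty\leq\pi\|f\|_{S_1^p}$, and for $S_k^p\subset S_{k-1}^p$ I would set $g=f^{(k-1)}$, observe $g'=f^{(k)}\in H^p$ so that $g\in S_1^p\subset H^\infty\subset H^p$, whence $f^{(k-1)}\in H^p$ and $f\in S_{k-1}^p$. I do not anticipate a serious obstacle here; the only point requiring care is the bookkeeping with Taylor coefficients in the base estimate and confirming that the cancellation in Hardy's inequality is exact, so that the constant is precisely $\pi$. Everything else is a matter of feeding $f'$ into the inductive hypothesis and using $\pi\geq 1$ to absorb the point-evaluation terms.
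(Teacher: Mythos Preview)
Your proposal is correct and follows essentially the same route as the paper: Hardy's inequality applied to $f'$ gives the base estimate $\|f\|_\infty\leq\pi\|f\|_{S_1^p}$, and the chain is then obtained from the recursive definition of the norm together with $\pi\geq1$. The only cosmetic difference is that the paper expands $\|f\|_{S_n^p}=|f(0)|+\cdots+|f^{(n-2)}(0)|+\|f^{(n-1)}\|_{S_1^p}$ fully and applies the base estimate to $f^{(n-1)}$ in one stroke, whereas you package the same computation as an induction on $k$ applied to $f'$; the content is identical.
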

\begin{proof}
We just need to prove (1), since (2) follows from (1) immediately. By Lemma~\ref{le1}, if $f\in S_1^p$, then
$$|f(z)|\leq\pi\|f'\|_{H^1}+|f(0)|\leq\pi\|f\|_{S_1^p}\,,\quad z\in\mathbb{D}\,.$$
Hence, $\|f\|_{\infty}\leq\pi\|f\|_{S_1^p}$\,. Now if $f\in S_n^p$, then
\begin{equation}\begin{split}\nonumber
\pi\|f\|_{S_n^p}&=\pi\left(|f(0)|+|f'(0)|+\cdots+|f^{(n-2)}(0)|+\|f^{(n-1)}\|_{S_1^p}\right)\\
&\geq |f(0)|+|f'(0)|+\cdots+|f^{(n-2)}(0)|+\|f^{(n-1)}\|_{\infty}\\
&\geq |f(0)|+|f'(0)|+\cdots+|f^{(n-2)}(0)|+\|f^{(n-1)}\|_{H^p}\\
&=\|f\|_{S_{n-1}^p}\,.
\end{split}\end{equation}
Thus, (1) follows by recursion immediately.
\end{proof}
By Proposition~\ref{pro3}, we see that for any $n\in\mathbb{N}$ and $1\leq p<\infty$, the inclusion operator from $S_n^p$ into $H^\infty$ is bounded. Thus, we obtain the following two equivalent norms for $S_n^p$\,.
\begin{corollary}\label{cor1}
Let $n\in\mathbb{N}$ and $1\leq p<\infty$. Then there exist two other equivalent norms for $S_n^p$ defined, respectively, by:
$$\|f\|=\|f\|_{H^p}+\|f'\|_{H^p}+\cdots+\|f^{(n-1)}\|_{H^p}+\|f^{(n)}\|_{H^p}\,,$$
and
$$\|f\|=\|f\|_{\infty}+\|f'\|_{\infty}+\cdots+\|f^{(n-1)}\|_{\infty}+\|f^{(n)}\|_{H^p}\,.$$
\end{corollary}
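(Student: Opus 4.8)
The plan is to show that each of the two displayed expressions is equivalent to the original, recursively defined norm. First I would unfold the recursion to write $\|f\|_{S_n^p}=\sum_{m=0}^{n-1}|f^{(m)}(0)|+\|f^{(n)}\|_{H^p}$, and abbreviate the two candidates as $N_1(f)=\sum_{m=0}^{n}\|f^{(m)}\|_{H^p}$ and $N_2(f)=\sum_{m=0}^{n-1}\|f^{(m)}\|_{\infty}+\|f^{(n)}\|_{H^p}$. Rather than verifying three separate two-sided equivalences, I would establish the single cyclic chain $\|f\|_{S_n^p}\le N_1(f)\le N_2(f)\le C\|f\|_{S_n^p}$ for some constant $C=C(n,p)$, which forces all three quantities to be comparable simultaneously.

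The two middle inequalities are elementary. For $N_1(f)\le N_2(f)$ I would use that the $H^p$-norm is dominated by the sup-norm, since $\frac{1}{2\pi}\int_0^{2\pi}|g(re^{i\theta})|^p\,d\theta\le\|g\|_{\infty}^p$; applying this to $g=f^{(m)}$ for $0\le m\le n-1$ and leaving the top term $\|f^{(n)}\|_{H^p}$ untouched yields the claim term by term. For $\|f\|_{S_n^p}\le N_1(f)$, the key observation is that evaluation at the origin is bounded on $H^p$ when $p\ge1$: writing $g(0)=\frac{1}{2\pi}\int_0^{2\pi}g(re^{i\theta})\,d\theta$ and applying Jensen's (or H\"older's) inequality gives $|g(0)|\le\|g\|_{H^p}$. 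Taking $g=f^{(m)}$ bounds each $|f^{(m)}(0)|$ by $\|f^{(m)}\|_{H^p}$, and summing over $0\le m\le n-1$ gives the inequality.

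The remaining estimate $N_2(f)\le C\|f\|_{S_n^p}$ is where the hypotheses really enter, and I expect it to be the crux. The point is that each derivative $f^{(m)}$ with $0\le m\le n-1$ itself lies in $S_{n-m}^p$, because $(f^{(m)})^{(n-m)}=f^{(n)}\in H^p$. Applying Proposition~\ref{pro3}(1) to $f^{(m)}$ in place of $f$ then gives $\|f^{(m)}\|_{\infty}\le\pi^{\,n-m}\|f^{(m)}\|_{S_{n-m}^p}$, and unfolding the recursive norm of $f^{(m)}$ shows $\|f^{(m)}\|_{S_{n-m}^p}=\sum_{j=m}^{n-1}|f^{(j)}(0)|+\|f^{(n)}\|_{H^p}\le\|f\|_{S_n^p}$. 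Hence $\|f^{(m)}\|_{\infty}\le\pi^{\,n-m}\|f\|_{S_n^p}$, and summing over $m$ together with the trivial bound $\|f^{(n)}\|_{H^p}\le\|f\|_{S_n^p}$ produces $N_2(f)\le\big(1+\sum_{m=0}^{n-1}\pi^{\,n-m}\big)\|f\|_{S_n^p}$.

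The only genuine care needed is in tracking that Proposition~\ref{pro3} is being applied to the shifted function $f^{(m)}$ rather than to $f$, and in confirming that the restriction $p\ge1$ is exactly what makes the origin-evaluation bound valid; both are routine once the cyclic chain is in place. Combining the three inequalities then yields the equivalence of all three norms on $S_n^p$.
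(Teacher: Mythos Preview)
Your proof is correct and follows essentially the same route the paper indicates: the paper simply remarks that Proposition~\ref{pro3} makes the inclusion $S_n^p\hookrightarrow H^\infty$ bounded and declares the corollary, while you supply the details by setting up the cyclic chain $\|f\|_{S_n^p}\le N_1(f)\le N_2(f)\le C\|f\|_{S_n^p}$ and invoking Proposition~\ref{pro3}(1) applied to $f^{(m)}$ for the only nontrivial step. Your handling of the point-evaluation bound $|g(0)|\le\|g\|_{H^p}$ and of the unfolded recursive norm is accurate, so nothing further is needed.
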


And from Corollary~\ref{cor1}, it follows that when $n\in\mathbb{N}$ and $1\leq p<\infty$, $S_n^p$ is a \mbox{Banach} space. In what follows, we are to prove furthermore that $S_n^p$ is also a \mbox{Banach} algebra with the pointwise multiplications.

\begin{proposition}\label{pro4}
Let $n\in\mathbb{N}$ and $1\leq p<\infty$. Then\\
\phantom{(1)}\textup{(1)} $S_n^p$ is a \mbox{Banach} algebra with the pointwise multiplications;\\
\phantom{(1)}\textup{(2)} polynomials are dense in $S_n^p$\,.
\end{proposition}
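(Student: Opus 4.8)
The plan is to prove the two assertions of Proposition~\ref{pro4} in sequence, using the equivalent norms from Corollary~\ref{cor1} throughout.

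For part~(1), I would show that $S_n^p$ is closed under pointwise multiplication and that the norm is submultiplicative (up to an equivalent constant). The natural tool is the general Leibniz rule: for $f,g\in S_n^p$ one has $(fg)^{(n)}=\sum_{k=0}^n\binom{n}{k}f^{(k)}g^{(n-k)}$. To estimate $\|fg\|$ in the equivalent norm $\|h\|_{\infty}+\cdots+\|h^{(n-1)}\|_{\infty}+\|h^{(n)}\|_{H^p}$, I would bound the lower-order terms $(fg)^{(j)}$ for $0\le j\le n-1$ using the same Leibniz expansion together with Proposition~\ref{pro3}(1), which gives $\|f^{(k)}\|_{\infty}\le\pi^{\,?}\|f\|_{S_n^p}$ for all the orders $k\le n-1$ that appear. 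The top-order term $(fg)^{(n)}$ is handled by splitting it as the sum over $k$: each summand $f^{(k)}g^{(n-k)}$ is estimated in $H^p$ by putting the factor of order $n$ (either $f^{(n)}$ or $g^{(n)}$, which lives in $H^p$) in $H^p$ and the remaining lower-order factor in $H^\infty$, again invoking Proposition~\ref{pro3}(1). Combining these yields $\|fg\|\le C\|f\|_{S_n^p}\|g\|_{S_n^p}$ for a constant depending only on $n$, so after passing to an equivalent submultiplicative norm $S_n^p$ is a Banach algebra.

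For part~(2), density of polynomials, I would argue that it suffices to approximate, in the $S_n^p$ norm, a given $f\in S_n^p$ by polynomials. The key observation is that the top-order map $f\mapsto f^{(n)}$ carries $S_n^p$ into $H^p$, where polynomials are dense (for $1\le p<\infty$). Given $f$, I would first approximate $f^{(n)}\in H^p$ by a polynomial $q$ in the $H^p$ norm, then integrate $q$ exactly $n$ times, choosing the constants of integration to match the Taylor coefficients $f(0),f'(0),\dots,f^{(n-1)}(0)$. The resulting polynomial $P$ satisfies $P^{(j)}(0)=f^{(j)}(0)$ for $0\le j\le n-1$ and $P^{(n)}=q$, so in the equivalent norm $\|f-P\|=\|f^{(n)}-q\|_{H^p}$ (all the lower-order $H^p$-difference terms vanish after expressing the norm appropriately, or are controlled by $\|f^{(n)}-q\|_{H^p}$ via repeated integration), which can be made arbitrarily small.

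The main obstacle I anticipate is the bookkeeping in part~(1): one must verify that every factor appearing in the Leibniz expansion of $(fg)^{(j)}$ for $j\le n$ can be controlled by the $S_n^p$ norm, which requires that derivatives of order up to $n-1$ are uniformly bounded (supplied by Proposition~\ref{pro3}(1)) and that the single top-order derivative is controlled in $H^p$. Because $H^p$ with $1\le p<\infty$ is a Banach space on which multiplication by an $H^\infty$ function is bounded, each mixed term $f^{(k)}g^{(n-k)}$ is genuinely in $H^p$, so no term falls outside the scale. Tracking the explicit powers of $\pi$ and the binomial constants is routine once this structural point is clear; the equivalence of norms in Corollary~\ref{cor1} lets me absorb all such constants without affecting the conclusion.
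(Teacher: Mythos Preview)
Your proposal is correct and follows essentially the same route as the paper: both parts use the Leibniz rule together with Proposition~\ref{pro3} to control mixed derivatives for~(1), and the $n$-fold antiderivative of a polynomial approximation of $f^{(n)}\in H^p$, with constants of integration matching $f^{(j)}(0)$, for~(2). The only cosmetic difference is that the paper works directly with the recursive norm $|f(0)|+\cdots+|f^{(n-1)}(0)|+\|f^{(n)}\|_{H^p}$, so in part~(2) the lower-order terms of $\|f-P\|_{S_n^p}$ vanish exactly and one gets $\|f-P\|_{S_n^p}=\|f^{(n)}-q\|_{H^p}$ without needing the integration estimate you mention.
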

\begin{proof}
(1) For any $f,g\in S_n^p$, by Proposition~\ref{pro3}, we have
\begin{equation}\begin{split}\nonumber
\|fg\|_{S_n^p}&=|f(0)g(0)|+|(fg)'(0)|+\cdots+|(fg)^{(n-1)}(0)|+\|(fg)^{(n)}\|_{H^p}\\
&\leq |f(0)g(0)|+\left(|f'(0)g(0)|+|f(0)g'(0)|\right)+\cdots\\
&\phantom{\leq}+\sum_{k=0}^{n-1}\binom{n-1}{k}\left|f^{(k)}(0)g^{(n-1-k)}(0)\right|+\left\|\sum_{k=0}^{n}\binom{n}{k}f^{(k)}g^{(n-k)}\right\|_{H^p}\\
&\leq \|f\|_{S_n^p}\|g\|_{S_n^p}+\left(\|f'\|_{S_{n-1}^p}\|g\|_{S_n^p}+\|f\|_{S_{n}^p}\|g'\|_{S_{n-1}^p}\right)+\cdots\\
&\phantom{\leq}+\sum_{k=0}^{n-1}\binom{n-1}{k}\left(\|f^{(k)}\|_{S_{n-k}^p}\|g^{(n-1-k)}\|_{S_{k+1}^p}\right)+\\
&\phantom{\leq}+\|f^{(n)}g\|_{H^p}+\|fg^{(n)}\|_{H^p}+\sum_{k=1}^{n-1}\binom{n}{k}\left\|f^{(k)}g^{(n-k)}\right\|_{H^p}\\
\end{split}\end{equation}
\begin{equation}\begin{split}\nonumber
\phantom{\|fg\|_{S_n^p}}&\leq \|f\|_{S_n^p}\|g\|_{S_n^p}+\left(\|f\|_{S_{n}^p}\|g\|_{S_n^p}+\|f\|_{S_{n}^p}\|g\|_{S_{n}^p}\right)+\cdots\\
&\phantom{\leq}+\sum_{k=0}^{n-1}\binom{n-1}{k}\left(\|f\|_{S_{n}^p}\|g\|_{S_{n}^p}\right)+\\
&\phantom{\leq}+\|f^{(n)}\|_{H^p}\|g\|_{\infty}+\|f\|_{\infty}\|g^{(n)}\|_{H^p}+\sum_{k=1}^{n-1}\binom{n}{k}\|f^{(k)}\|_{\infty}\|g^{(n-k)}\|_{\infty}\\
&\leq \sum_{k=0}^{n-1}2^k\|f\|_{S_n^p}\|g\|_{S_n^p}+\pi^n\|f\|_{S_n^p}\|g\|_{S_n^p}+\pi^n\|f\|_{S_n^p}\|g\|_{S_n^p}+\\
&\phantom{\leq}+\sum_{k=1}^{n-1}\binom{n}{k}\pi^{n-k}\|f^{(k)}\|_{S_{n-k}^p}\pi^k\|g^{(n-k)}\|_{S_{k}^p}\\
&\leq (2^n-1)\|f\|_{S_n^p}\|g\|_{S_n^p}+\binom{n}{0}\pi^n\|f\|_{S_n^p}\|g\|_{S_n^p}+\binom{n}{n}\pi^n\|f\|_{S_n^p}\|g\|_{S_n^p}+\\
&\phantom{\leq}+\sum_{k=1}^{n-1}\binom{n}{k}\pi^{n}\|f\|_{S_{n}^p}\|g\|_{S_{n}^p}\\
&\leq (2^n(1+\pi^n)-1)\|f\|_{S_n^p}\|g\|_{S_n^p}\,.
\end{split}\end{equation}
Thus, it can be easily verified that $\|f\|=(2^n(1+\pi^n)-1)\|f\|_{S_n^p}$ is the equivalent norm that makes $S_n^p$ into a Banach algebra.

(2) If $f\in S_n^p$, then $f^{(n)}\in H^p$ by definition. Since polynomials are dense in $H^p$, there exists a sequence of polynomials, $\{p_m(z)\}_{m=0}^\infty$\,, such that $\|p_m-f^{(n)}\|_{H^p}\rightarrow0$ as $m\rightarrow\infty$\,. Let
$$P_m(z)=f(0)+f'(0)z+\cdots+\frac{f^{(n-1)}(0)}{(n-1)!}z^{n-1}+\frac{1}{(n-1)!}\int_0^z(z-\zeta)^{n-1}p_m(\zeta)d\zeta\,.$$
Then $P_m(z)$ is also a polynomial. What's more,
\begin{equation}\begin{split}\nonumber
\|P_m-f\|_{S_n^p}&=|P_m(0)-f(0)|+|P'_m(0)-f'(0)|+\cdots\\
&\phantom{=}+|P^{(n-1)}_m(0)-f^{(n-1)}(0)|+\|P^{(n)}_m-f^{(n)}\|_{H^p}\\
&=\|p_m-f^{(n)}\|_{H^p}\,.
\end{split}\end{equation}
It follows that $\|P_m-f\|_{S_n^p}\rightarrow0$ as $m\rightarrow\infty$\,.
\end{proof}

From Proposition~\ref{pro4} and the definition of $S_{n,0}^p$\,, we obtain the following corollary directly.
\begin{corollary}\label{cor2}
Let $n\in\mathbb{N}$ and $1\leq p<\infty$. Then $S_{n,0}^p$, endowed with the same norm defined for $S_n^p$, is a \mbox{Banach} subalgebra of $S_n^p$\,, and $S_{n,0}^p=\overline{span}\{z^{n+j}: j\in \mathbb{N}\cup\{0\}\}$\,.
\end{corollary}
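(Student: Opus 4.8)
The plan is to deduce everything directly from Proposition~\ref{pro4} together with the construction used in its proof, so the argument is short. First I would record that $S_{n,0}^p$ is a closed linear subspace of $S_n^p$. Linearity is immediate, since each defining condition $f^{(m)}(0)=0$ is linear in $f$. For closedness I would invoke Corollary~\ref{cor1}: in the equivalent norm $\|f\|=\|f\|_\infty+\cdots+\|f^{(n-1)}\|_\infty+\|f^{(n)}\|_{H^p}$ each evaluation functional $f\mapsto f^{(m)}(0)$ with $0\le m\le n-1$ obeys $|f^{(m)}(0)|\le\|f^{(m)}\|_\infty\le\|f\|$ and is therefore continuous, so $S_{n,0}^p=\bigcap_{m=0}^{n-1}\ker(f\mapsto f^{(m)}(0))$ is a finite intersection of closed hyperplanes, hence closed.

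Next I would verify that $S_{n,0}^p$ is closed under multiplication. If $f,g\in S_{n,0}^p$, then each vanishes to order at least $n$ at the origin, so $fg$ vanishes to order at least $2n\ge n$; thus $(fg)^{(m)}(0)=0$ for $0\le m\le n-1$, and since $fg\in S_n^p$ by Proposition~\ref{pro4}(1), we conclude $fg\in S_{n,0}^p$. As $S_n^p$ is a Banach algebra in the equivalent norm $\|f\|=(2^n(1+\pi^n)-1)\|f\|_{S_n^p}$, its closed subalgebra $S_{n,0}^p$ inherits submultiplicativity and completeness, and is therefore itself a Banach algebra in the same norm.

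For the identity $S_{n,0}^p=\overline{span}\{z^{n+j}:j\in\mathbb N\cup\{0\}\}$ the inclusion $\supseteq$ is immediate, since every monomial $z^{n+j}$ lies in $S_{n,0}^p$ and the latter is a closed subspace. For the reverse inclusion I would reuse the approximation from Proposition~\ref{pro4}(2). Given $f\in S_{n,0}^p$, choose polynomials $p_m\to f^{(n)}$ in $H^p$ and form the polynomials $P_m$ exactly as there; because $f(0)=\cdots=f^{(n-1)}(0)=0$, the explicit formula collapses to $P_m(z)=\frac{1}{(n-1)!}\int_0^z(z-\zeta)^{n-1}p_m(\zeta)\,d\zeta$. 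The key observation is that this $n$-fold antiderivative of $p_m$, normalized to vanish to order $n$ at the origin, is a polynomial all of whose monomials have degree at least $n$, so $P_m\in span\{z^{n+j}:j\ge0\}$, while the same computation as in Proposition~\ref{pro4}(2) yields $\|P_m-f\|_{S_n^p}=\|p_m-f^{(n)}\|_{H^p}\to0$. Hence $f$ lies in the closed span, giving $\subseteq$.

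The only point requiring genuine care is this reverse inclusion, where one must confirm that the approximating polynomials $P_m$ retain the prescribed vanishing to order $n$ at the origin rather than merely approximating $f$ in $S_n^p$. This is exactly what the collapsed integral formula guarantees, so no real obstacle arises and the corollary indeed follows directly from Proposition~\ref{pro4}.
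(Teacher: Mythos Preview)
Your proof is correct and follows exactly the route the paper intends: the corollary is stated there as an immediate consequence of Proposition~\ref{pro4} and the definition of $S_{n,0}^p$, and you have simply written out those details. One minor simplification: for closedness you need not pass through Corollary~\ref{cor1}, since unfolding the recursive norm gives $\|f\|_{S_n^p}=|f(0)|+\cdots+|f^{(n-1)}(0)|+\|f^{(n)}\|_{H^p}$, so each evaluation $f\mapsto f^{(m)}(0)$ is trivially bounded by the original norm.
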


The following two lemmas characterize the relationships between the invariant subspaces of the shift and the closed ideals of Banach algebra $S_{n}^p$ and $S_{n,0}^p$\,.

\begin{lemma}\label{le2}
Let $n\in\mathbb{N}$, $1\leq p<\infty$. Then the invariant subspaces of $M_{z}$ on $S_{n}^p$ are precisely the closed ideals of $S_{n}^p$\,.
\end{lemma}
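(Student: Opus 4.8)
The plan is to prove the two inclusions separately, leaning on the Banach-algebra structure of $S_n^p$ established in Proposition~\ref{pro4}. Both directions are short, and the entire content of the lemma is the interplay between $M_z$-invariance and closure under arbitrary algebra multipliers.

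For the easy containment I would first observe that every closed ideal $\mathcal{J}$ of $S_n^p$ is $M_z$-invariant: the monomial $z$ is a polynomial and hence lies in $S_n^p$, so the defining property of an ideal gives $M_z f = z\cdot f\in\mathcal{J}$ for every $f\in\mathcal{J}$. Together with the closedness of $\mathcal{J}$ this is exactly the statement that $\mathcal{J}$ is an invariant subspace.

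For the reverse and substantive direction I would take a closed $M_z$-invariant subspace $\mathcal{J}$ and show it absorbs multiplication by every element of $S_n^p$. Iterating the invariance gives $z^k f = M_z^k f\in\mathcal{J}$ for all $k\ge 0$ and all $f\in\mathcal{J}$, whence by linearity $q\cdot f\in\mathcal{J}$ for every polynomial $q$. To upgrade from polynomial multipliers to an arbitrary $g\in S_n^p$, I would use the density of polynomials from Proposition~\ref{pro4}(2) to pick polynomials $q_m\to g$ in the $S_n^p$ norm, and then invoke the submultiplicativity supplied by Proposition~\ref{pro4}(1): with the equivalent algebra norm one has $\|q_m f - gf\|_{S_n^p}=\|(q_m-g)f\|_{S_n^p}\le C\,\|q_m-g\|_{S_n^p}\,\|f\|_{S_n^p}\to 0$. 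Since each $q_m f\in\mathcal{J}$ and $\mathcal{J}$ is closed, the limit $gf$ lies in $\mathcal{J}$, so $\mathcal{J}$ is an ideal; it is closed by hypothesis.

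The only step that requires any care — the ``hard part'', such as it is — is the limiting argument in the second direction, and this rests entirely on the continuity of pointwise multiplication in $S_n^p$, which is precisely what the Banach-algebra property of Proposition~\ref{pro4}(1) provides, combined with the polynomial density of Proposition~\ref{pro4}(2). With these two ingredients assembled I would not expect any genuine obstacle.
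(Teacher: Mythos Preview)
Your proposal is correct and follows essentially the same approach as the paper: both directions hinge on Proposition~\ref{pro4}, with the nontrivial one obtained by iterating $M_z$ to get polynomial multipliers and then passing to arbitrary $g\in S_n^p$ via polynomial density and continuity of multiplication. The only cosmetic difference is that the paper packages the limiting step by introducing the multiplier set $\mathcal{M}_{\mathcal{J}}=\{h\in S_n^p: h\mathcal{J}\subset\mathcal{J}\}$, observes it is a closed subspace containing all monomials, and concludes $\mathcal{M}_{\mathcal{J}}=S_n^p$; this is equivalent to your direct approximation argument.
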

\begin{proof}
Let $\mathcal{J}$ be an invariant subspace of $M_z$ on $S_{n}^p$\,. Then it is easily verified that the set
$$\mathcal{M}_{\mathcal{J}}=\{h\in S_{n}^p: hx\in\mathcal{J}\text{ whenever }x\in \mathcal{J}\}$$
is a (closed) subspace of $S_{n}^p$\,. Since for any $x\in \mathcal{J}$, $z^mx=M^m_zx\in\mathcal{J}$ whenever $m\in \mathbb{N}\cup\{0\}$, it follows that $z^m\ (m\in \mathbb{N}\cup\{0\})$ are all in $\mathcal{M}_{\mathcal{J}}$\,. Now it follows from Proposition~\ref{pro4} that $\mathcal{M}_{\mathcal{J}}=S_{n}^p$\,. Thus, $\mathcal{J}$ is a closed ideal of $S_{n}^p$\,.

Conversely, since $z$ is an element of $S_{n}^p$, it is clear that every closed ideal of $S_{n}^p$ is an invariant subspace of $M_{z}$ on $S_{n}^p$\,.
\end{proof}

\begin{lemma}\label{le3}
Let $n\in\mathbb{N}$, $1\leq p<\infty$. Then the invariant subspaces of $M_{z}$ on $S_{n,0}^p$ are precisely the closed ideals of $S_{n}^p$ contained in $S_{n,0}^p$\,.
\end{lemma}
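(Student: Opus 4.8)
The plan is to mirror the argument of Lemma~\ref{le2}, exploiting the fact that $S_{n,0}^p$ sits inside $S_n^p$ as a closed subspace (by Corollary~\ref{cor2}) so that closedness and multiplication transfer cleanly between the two spaces. I will prove the two inclusions separately.

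For the easy direction, suppose $\mathcal{J}$ is a closed ideal of $S_n^p$ with $\mathcal{J}\subseteq S_{n,0}^p$. Since $z\in S_n^p$, the ideal property gives $M_z x = zx\in\mathcal{J}$ for every $x\in\mathcal{J}$, so $\mathcal{J}$ is $M_z$-invariant. Moreover, because $S_{n,0}^p$ carries the norm inherited from $S_n^p$ and is closed in $S_n^p$ (Corollary~\ref{cor2}), any set closed in $S_n^p$ and contained in $S_{n,0}^p$ is automatically closed in $S_{n,0}^p$. Hence $\mathcal{J}$ is a closed invariant subspace of $M_z$ on $S_{n,0}^p$.

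For the reverse direction, let $\mathcal{J}$ be an invariant subspace of $M_z$ on $S_{n,0}^p$; it is contained in $S_{n,0}^p$ by hypothesis, and it is closed in $S_n^p$ since $S_{n,0}^p$ is closed in $S_n^p$. It remains to show that $\mathcal{J}$ is an ideal of the whole algebra $S_n^p$, and here I will reuse the device from Lemma~\ref{le2}: set
$$\mathcal{M}_{\mathcal{J}}=\{h\in S_n^p:\ hx\in\mathcal{J}\ \text{whenever}\ x\in\mathcal{J}\}\,.$$
This is a linear subspace, and it is closed because multiplication in the Banach algebra $S_n^p$ (Proposition~\ref{pro4}) is continuous and $\mathcal{J}$ is closed in $S_n^p$: if $h_k\to h$ with $h_k\in\mathcal{M}_{\mathcal{J}}$, then $h_k x\to hx$ and each $h_k x\in\mathcal{J}$, so $hx\in\mathcal{J}$. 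The $M_z$-invariance of $\mathcal{J}$ gives $z^m x = M_z^m x\in\mathcal{J}$ for all $m\geq 0$ and $x\in\mathcal{J}$, so every monomial, and hence every polynomial, lies in $\mathcal{M}_{\mathcal{J}}$. Since polynomials are dense in $S_n^p$ (Proposition~\ref{pro4}) and $\mathcal{M}_{\mathcal{J}}$ is closed, we conclude $\mathcal{M}_{\mathcal{J}}=S_n^p$; that is, $hx\in\mathcal{J}$ for every $h\in S_n^p$ and $x\in\mathcal{J}$, so $\mathcal{J}$ is a closed ideal of $S_n^p$ contained in $S_{n,0}^p$.

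The main thing to watch is the interplay of the two topologies: I must be able to use \emph{closed in $S_{n,0}^p$} and \emph{closed in $S_n^p$} interchangeably for subsets of $S_{n,0}^p$, which is exactly what the fact that $S_{n,0}^p$ is a closed subspace of $S_n^p$ (Corollary~\ref{cor2}) supplies. Beyond that, the argument is essentially a transcription of Lemma~\ref{le2}; the only genuinely new point is that the products $hx$ produced by the $\mathcal{M}_{\mathcal{J}}$ argument must return to $S_{n,0}^p$, and this is immediate since we only ever conclude $hx\in\mathcal{J}\subseteq S_{n,0}^p$.
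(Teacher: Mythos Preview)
Your proof is correct and follows essentially the same route as the paper: the easy direction is immediate from $z\in S_n^p$, and for the converse you use the multiplier set $\mathcal{M}_{\mathcal{J}}$, show all monomials lie in it, and conclude by density of polynomials in $S_n^p$. Your version is in fact slightly cleaner, since you correctly take $\mathcal{M}_{\mathcal{J}}\subseteq S_n^p$ from the start (the paper writes $S_{n,0}^p$ there, which is a slip) and you make explicit the point that closedness in $S_{n,0}^p$ and in $S_n^p$ coincide for subsets of $S_{n,0}^p$.
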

\begin{proof}
It is obvious that if $\mathcal{J}$ is a closed ideal of $S_{n}^p$ contained in $S_{n,0}^p$\,, then $\mathcal{J}$ is an invariant subspaces of $M_{z}$ on $S_{n,0}^p$\,.

Conversely, let $\mathcal{J}$ be an invariant subspace of $M_z$ on $S_{n,0}^p$\,. Then it is easily verified that the set
$$\mathcal{M}_{\mathcal{J}}=\{h\in S_{n,0}^p: hx\in\mathcal{J}\text{ whenever }x\in \mathcal{J}\}$$
is a (closed) subspace of $S_{n,0}^p$\,. Since for any $x\in \mathcal{J}$, $z^mx=M^m_zx\in\mathcal{J}$ whenever $m\in \mathbb{N}\cup\{0\}$, it follows that $z^m\ (m\in \mathbb{N}\cup\{0\})$ are all in $\mathcal{M}_{\mathcal{J}}$\,. Now it follows from Proposition~\ref{pro4} that $\mathcal{M}_{\mathcal{J}}=S_{n}^p$\,. Thus, $\mathcal{J}$ is a closed ideal of $S_{n}^p$ contained in $S_{n,0}^p$\,.
\end{proof}

The following lemma, deduced from \cite{SF}, characterizes all the nontrivial closed ideals of $S_{n}^p$\,. It should be noted that Korenblum \cite{BK} had obtaind the special case of this lemma when $p=2$\,.
\begin{lemma}\label{le4}
Let $n\in\mathbb{N}$, $1\leq p<\infty$, and suppose that $\int_{0}^{2\pi}\log\rho(e^{i\theta})d\theta>-\infty$, where $\rho(z)$ is the distance of $z$ from the union of $K_0$ and the zero set of $G$\,. Then $J_{S_{n}^p}(G;K_0,K_1,\ldots,K_{n-1})$ is a nontrivial closed ideal of $S_{n}^p$\,. Conversely, If $\mathcal{J}$ is a nontrivial closed ideal of $S_{n}^p$\,, then there exist $G,K_0,\ldots,K_{n-1}$ such that
$$\mathcal{J}=J_{S_{n}^p}(G;K_0,K_1,\ldots,K_{n-1})\,.$$
\end{lemma}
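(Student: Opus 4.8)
The plan is to treat this lemma not as something to be proved from scratch, but as a translation of the general closed-ideal theorem for smooth analytic Banach algebras established in \cite{SF} into the concrete setting of $S_n^p$; the role of Propositions~\ref{pro3} and~\ref{pro4} is precisely to certify that $S_n^p$ meets the structural hypotheses of that theorem, namely a Banach algebra of analytic functions that, together with its first $n-1$ derivatives, embeds continuously in the disk algebra (Proposition~\ref{pro3}) and in which polynomials are dense (Proposition~\ref{pro4}). First I would fix the dictionary between the data $(G;K_0,\ldots,K_{n-1})$ and an ideal: given a closed ideal $\mathcal J$, take $G$ to be the greatest common inner divisor of the inner parts of all $f\in\mathcal J$, and for $0\le j\le n-1$ set
$$K_j=\{\zeta\in\partial\mathbb D:\ f^{(i)}(\zeta)=0\ \text{for all }f\in\mathcal J\text{ and all }0\le i\le j\}\,.$$
With this definition the nesting $K_0\supset\cdots\supset K_{n-1}$ of property~(1) is automatic, and the boundary values $f^{(i)}(\zeta)$ are well defined, since for $i\le n-1$ the derivative $f^{(i)}\in S_{n-i}^p\subset H^\infty$ is the antiderivative of an $H^p\subset H^1$ function and hence extends continuously to $\overline{\mathbb D}$.

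For the first assertion I would verify that $J:=J_{S_n^p}(G;K_0,\ldots,K_{n-1})$ is a closed ideal and then that it is nontrivial. That $J$ is an ideal rests on two elementary facts: by the Leibniz rule together with the nesting of the $K_j$, if $f^{(i)}$ vanishes on $K_i$ for all $i\le j$ then $(fg)^{(j)}$ vanishes on $K_j$ for every $g\in S_n^p$; and the inner factor of a product is the product of the inner factors, so divisibility by $G$ persists. Closedness I would draw from Corollary~\ref{cor1}: convergence in $S_n^p$ entails uniform convergence of $f,f',\ldots,f^{(n-1)}$ on $\overline{\mathbb D}$, so the pointwise boundary-vanishing conditions survive the limit, while the set of functions whose inner factor is divisible by the fixed inner function $G$ is closed under $H^p$-convergence.

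Nontriviality is the genuinely analytic step and is exactly where the hypothesis $\int_0^{2\pi}\log\rho(e^{i\theta})\,d\theta>-\infty$ enters: one must exhibit a single nonzero element of $J$. I would build it as $G\cdot O\cdot B$, where $O$ is an outer function whose boundary modulus is comparable to a suitable power of $\rho$ (producing the prescribed vanishing on $K_0$, and, after multiplication by factors adapted to the nested sets, the higher-order vanishing on $K_1,\ldots,K_{n-1}$), arranged so that the $n$-th derivative lies in $H^p$. An outer function is nonzero precisely when $\log|O|\in L^1(\partial\mathbb D)$, and the Beurling--Carleson-type finiteness of $\int\log\rho$ is exactly what secures this, so the construction does not collapse to $0$; this is the substance imported from \cite{SF}, and from \cite{BK} in the case $p=2$.

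For the converse, with $G$ and the $K_j$ defined as above, the inclusion $\mathcal J\subset J$ is immediate from the definitions, and the nontriviality of $\mathcal J$ forces the extracted data to satisfy the entropy condition, since a nonzero member of $\mathcal J$ has integrable log-modulus and its vanishing dominates $\rho$. The reverse inclusion $J\subset\overline{\mathcal J}=\mathcal J$ is the heart of the matter, and here I would lean fully on the division-and-approximation theorem of \cite{SF}: every $f\in J$ must be shown to be an $S_n^p$-limit of elements of $\mathcal J$, using density of polynomials (Proposition~\ref{pro4}), premultiplication by generators of $\mathcal J$, and the Korenblum--Shamoyan construction of functions in $S_n^p$ realizing prescribed boundary zero orders under the entropy condition; the confinement of the zeros of $G$ and of its singular measure to $K_{n-1}$, and the isolation of $K_0\setminus K_{n-1}$ in properties~(2)--(4), follow because membership in $S_n^p$ forces additional boundary smoothness that pushes the "heavy" part of the ideal into the smallest set. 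I expect this reverse inclusion, jointly with the nontriviality construction, to be the main obstacle: both depend on the deep function theory of \cite{SF} rather than on formal manipulation, whereas the remainder of the argument is a bookkeeping reduction confirming that $S_n^p$ satisfies the axioms of that theorem.
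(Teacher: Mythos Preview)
Your proposal is correct and aligns with the paper's own treatment: the paper does not prove Lemma~\ref{le4} from scratch but simply states that it is ``deduced from \cite{SF}'' (with the $p=2$ case due to Korenblum \cite{BK}), which is exactly the reduction you carry out. Your sketch is in fact considerably more detailed than what the paper provides, since you spell out why Propositions~\ref{pro3} and~\ref{pro4} place $S_n^p$ within the scope of the \v{S}amojan framework and outline the mechanics of both directions, whereas the paper leaves all of this implicit in the citation.
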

\begin{remark}\label{re1}
Let $n\in\mathbb{N}$, $1\leq p<\infty$\,. Then $J_{S_{n}^p}(G;K_0,K_1,\ldots,K_{n-1})$ is a zero ideal if and only if $\int_{0}^{2\pi}\log\rho(e^{i\theta})d\theta>-\infty$, where $\rho(z)$ is the distance of $z$ from the union of $K_0$ and the zero set of $G$.
\end{remark}

Now, we can give the proof of Proposition~\ref{pro1} and Proposition~\ref{pro2} in Section~2.
\begin{proof}[Proof of Proposition~\ref{pro1}]
By Lemma~\ref{le2}, Lemma~\ref{le4} and Remark~\ref{re1}, the proposition holds immediately.
\end{proof}

\begin{proof}[Proof of Proposition~\ref{pro2}]
According to Corollary~\ref{cor2}, $S_{n,0}^p=z^nS_{n}^p$\,. Then by Lemma~\ref{le4} and Remark~\ref{re1}, the closed ideals of $S_{n}^p$ contained in $S_{n,0}^p$ are precisely of the form $J_{S_{n,0}^p}(G;K_0,K_1,\ldots,K_{n-1})$\,. Thus by Lemma~\ref{le3}, the proposition follows.
\end{proof}

\section{\bf The proof of the main results}
In this section, we want to complete the proof of the main theorem, Theorem~\ref{th1}.

\begin{lemma}\label{le5}
Let $n\in\mathbb{N}$, $1\leq p<\infty$\,. Then\\
\textup{(1)} The operator $\frac{d^n}{dz^n}$ is an isometrical isomorphism from $S_{n,0}^p$ onto $H^p$\,, and its inverse is $V_n$ such that
$$(V_nf)(z)=\frac{1}{(n-1)!}\int_0^z(z-\zeta)^{n-1}f(\zeta)d\zeta\;;$$
\textup{(2)} The operator $M_z$ is similar to $T$ under the operator $\frac{d^n}{dz^n}$\,, that is,\\ $$M_z=\left(\frac{d^n}{dz^n}\right)^{-1}T\frac{d^n}{dz^n}\,.$$
\end{lemma}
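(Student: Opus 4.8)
The plan is to treat the two parts separately: part (1) furnishes the isometry together with its explicit inverse, and part (2) then reduces to a single intertwining identity that is immediate once (1) is available.

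For part (1), I would first unwind the recursive definition $\|f\|_{S_n^p}=|f(0)|+\|f'\|_{S_{n-1}^p}$ into the closed form $\|f\|_{S_n^p}=\sum_{m=0}^{n-1}|f^{(m)}(0)|+\|f^{(n)}\|_{H^p}$. On $S_{n,0}^p$ every point-evaluation term vanishes by definition, so $\|f\|_{S_{n,0}^p}=\|f^{(n)}\|_{H^p}=\|\frac{d^n}{dz^n}f\|_{H^p}$, which is exactly the isometry claim. It then remains to show that $\frac{d^n}{dz^n}$ maps onto $H^p$ with inverse $V_n$. Given $g\in H^p$, I set $f=V_n g$ and verify by differentiating under the integral sign that $f'=V_{n-1}g$: the boundary contribution is $\frac{1}{(n-1)!}(z-\zeta)^{n-1}g(\zeta)\big|_{\zeta=z}=0$ for $n\ge 2$, while the case $n=1$ is the elementary antiderivative. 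Iterating gives $f^{(n)}=g$, and evaluating the formulas for $f,f',\dots,f^{(n-1)}$ at $z=0$ gives $f^{(m)}(0)=0$ for $0\le m\le n-1$; hence $f\in S_{n,0}^p$ and $\frac{d^n}{dz^n}V_n=\mathrm{id}_{H^p}$. The reverse composition $V_n\frac{d^n}{dz^n}=\mathrm{id}_{S_{n,0}^p}$ follows from uniqueness of the $n$-fold antiderivative with prescribed vanishing initial data: for $f\in S_{n,0}^p$ both $V_n(f^{(n)})$ and $f$ have $n$th derivative $f^{(n)}$ and vanish to order $n$ at the origin, so they coincide.

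For part (2), since $\frac{d^n}{dz^n}$ is invertible by part (1), it suffices to prove the intertwining relation $\frac{d^n}{dz^n}M_z=T\frac{d^n}{dz^n}$ on $S_{n,0}^p$, that is, $(zh)^{(n)}=T(h^{(n)})$ for every $h\in S_{n,0}^p$. Applying the Leibniz rule to $M_z h=zh$ and using $(z)^{(k)}=0$ for $k\ge 2$ collapses the sum to two terms, $(zh)^{(n)}=zh^{(n)}+nh^{(n-1)}$. On the other side, recalling $Tf=M_z f+nT_z f$ with $(T_z f)(z)=\int_0^z f(\omega)\,d\omega$, I compute $T(h^{(n)})=zh^{(n)}(z)+n\int_0^z h^{(n)}(\omega)\,d\omega=zh^{(n)}(z)+n\bigl(h^{(n-1)}(z)-h^{(n-1)}(0)\bigr)$. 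The two expressions agree precisely because $h^{(n-1)}(0)=0$ for $h\in S_{n,0}^p$, which gives the claimed similarity.

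The computations here are short, so the genuine content is locating where each hypothesis is used rather than surmounting a technical obstacle. The two places to watch are the vanishing of the boundary term in the differentiation-under-the-integral step of part (1) and, more importantly, the vanishing of $h^{(n-1)}(0)$ in part (2): it is exactly this that forces the use of $S_{n,0}^p$ rather than $S_n^p$, since on $S_n^p$ the leftover term $-nh^{(n-1)}(0)$ would destroy the intertwining. I would therefore state explicitly that both identities are asserted on $S_{n,0}^p$ and emphasize that membership there is what makes the Volterra boundary term disappear.
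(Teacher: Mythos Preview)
Your proposal is correct and follows essentially the same approach as the paper: both derive the isometry directly from the recursive norm (which collapses on $S_{n,0}^p$), verify that $V_n$ inverts $\frac{d^n}{dz^n}$ via the Cauchy formula for repeated integration, and prove the similarity by the Leibniz expansion $(zh)^{(n)}=zh^{(n)}+nh^{(n-1)}$. Your write-up is in fact slightly more explicit than the paper's in isolating the step $\int_0^z h^{(n)}=h^{(n-1)}(z)-h^{(n-1)}(0)$ and pointing out that the hypothesis $h\in S_{n,0}^p$ is precisely what kills the constant $h^{(n-1)}(0)$; the paper suppresses this and simply writes $nf^{(n-1)}=nT_z f^{(n)}$.
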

\begin{proof}
(1) The fact that $\frac{d^n}{dz^n}$ is an isometrical isomorphism from $S_{n,0}^p$ onto $H^p$ follows from the definition of norm for $S_{n,0}^p$\,. The remainder follows from the fact that if $f\in S_{n,0}^p$\,, then  $$f(z)=\frac{1}{(n-1)!}\int_0^z(z-\zeta)^{n-1}f^{(n)}(\zeta)d\zeta\,,$$
and that if $f\in H^p$\, then
$$\frac{d^n}{dz^n}\left(\frac{1}{(n-1)!}\int_0^z(z-\zeta)^{n-1}f(\zeta)d\zeta\right)=f(z)\,.$$

(2) Now, if $f\in S_{n,0}^p$\,, then for any $z\in \mathbb{D}$,
\begin{equation}\begin{split}\nonumber
\left(\frac{d^n}{dz^n}M_zf\right)(z)&=\frac{d^n}{dz^n}(zf(z))=\binom{n}{0}zf^{(n)}(z)+\binom{n}{1}f^{(n-1)}(z)\\
&=(M_zf^{(n)})(z)+(n T_{z}f^{(n)})(z)\\
&=\left(T\frac{d^n}{dz^n}f\right)(z)\,.
\end{split}\end{equation}
\end{proof}

Finally, we can give the proof of the main theorem, Theorem~\ref{th1}.
\begin{proof}[Proof of Theorem~\ref{th1}]
By Proposition~\ref{pro2}, it suffices to prove that $\frac{d^n}{dz^n}\left(\mathcal{J}\right)$ is an invariant subspace of $T$ on $H^p$ if and only if $\mathcal J$ is an invariant subspace of $M_z$ on $S_{n,0}^p$\,.

By Lemma~\ref{le5}, the operator $\frac{d^n}{dz^n}$ is an isometrical isomorphism from $S_{n,0}^p$ onto $H^p$\,, hence, $\frac{d^n}{dz^n}\left(\mathcal{J}\right)$ is a (closed) subspace of $H^p$ if and only if $\mathcal J$ is a (closed) subspace of $S_{n,0}^p$\,.

If $\frac{d^n}{dz^n}\left(\mathcal{J}\right)$ is an invariant subspace of $T$ on $H^p$, according to Lemma~\ref{le5},
$$M_z(\mathcal J)=\left(\frac{d^n}{dz^n}\right)^{-1}T\frac{d^n}{dz^n}\left(\mathcal J\right)\subset\left(\frac{d^n}{dz^n}\right)^{-1}\frac{d^n}{dz^n}\left(\mathcal J\right)=\mathcal J\,,$$
which implies that $\mathcal J$ is an invariant subspace of $M_z$ on $S_{n,0}^p$\,.

Conversely, if $\mathcal J$ is an invariant subspace of $M_z$ on $S_{n,0}^p$, then
$$T\left(\frac{d^n}{dz^n}\left(\mathcal{J}\right)\right)=\frac{d^n}{dz^n}M_z\left(\mathcal{J}\right)\subset\frac{d^n}{dz^n}\left(\mathcal{J}\right)\,,$$
which implies that $\frac{d^n}{dz^n}\left(\mathcal{J}\right)$ is an invariant subspace of $T$ on $H^p$.

Thus, we complete the proof.
\end{proof}

\end{document}